\documentclass[12pt, reqno, a4paper]{amsart}
\usepackage{times}

\usepackage[utf8]{inputenc}
\usepackage[USenglish]{babel}
\usepackage{amsmath,amsthm,amssymb,amsfonts}
\usepackage{mathtools}
\usepackage{mathrsfs}
\usepackage{bbm}
\usepackage{booktabs}

\usepackage{indentfirst}
\usepackage{enumitem}
\usepackage{xcolor}

\usepackage{float}

\usepackage[breaklinks=true, bookmarksopenlevel=1, bookmarksdepth=2]{hyperref}
\hypersetup{
colorlinks,
   linkcolor={cyan!80!black},
   citecolor={cyan!80!black},
 urlcolor={cyan!80!black}
}

\allowdisplaybreaks

\newtheorem{thm}{Theorem}[section]
\newtheorem{cor}[thm]{Corollary}
\newtheorem{lem}[thm]{Lemma}

\theoremstyle{plain} 
\newcommand{\thistheoremname}{}
\newtheorem*{genericthm}{\thistheoremname}

\theoremstyle{definition}

\newtheorem{prob}[thm]{Problem}

\theoremstyle{remark}

\newtheorem*{ntt}{Notation}

\numberwithin{equation}{section}

\newcommand{\N}{\mathbb{N}}      
\newcommand{\Z}{\mathbb{Z}}      
\newcommand{\eps}{\varepsilon}   

\renewcommand{\pmod}[1]{         
  ~(\mathrm{mod}~#1)}

\newcommand{\ul}[1]{\mathbf{#1}} 

\usepackage[margin=1in]{geometry}

\restylefloat{table}

\setlist[itemize]{leftmargin=*}
\setlist[enumerate]{leftmargin=*}

\begin{document}

\title{Bounded asymptotic bases for linear forms}%
\author{Christian T\'afula}%
\address{Institute of Mathematics, Statistics\\
and Computer Science\\
University of S\~ao Paulo\\
Rua do Mat\~ao, 1010\\
S\~ao Paulo, SP 05508-090\\
Brazil}
\curraddr{}
\email{tafula@ime.usp.br}
\thanks{}

\subjclass[2020]{11B13, 11B34}%
\keywords{additive bases, representation functions, linear forms, bounded representations, Erd\H{o}s--Tur\'an conjecture}%

\begin{abstract}
 For a vector of positive integers $\mathbf{b} = (b_1,\ldots,b_h)$ with $\gcd(b_1,\ldots,b_h) = 1$, we study sets $A \subseteq \mathbb{N}$ for which every sufficiently large integer has a bounded positive number of representations
 \[ n = b_1 x_1 + \cdots + b_h x_h \qquad (x_1,\ldots,x_h\in A). \]
 We prove that such a set exists for every binary vector $\mathbf{b} \neq (1,1)$, and for some general higher-dimensional families, including $\mathbf{b} = (u_1, p^d u_2, \ldots, p^{(h-1)d} u_h)$ where $p\nmid u_1\cdots u_h$.
\end{abstract}

\maketitle

\section{Introduction}
 Let $h\geq 2$ be an integer, let $\ul{b}=(b_1,\ldots,b_h)$ be a vector of positive integers, and let $A \subseteq \N := \Z_{\geq 0}$. We consider the representation function
 \[ r_{A,\ul{b}}(n) := \#\{(x_1,\ldots,x_h)\in A^h ~|~ b_1 x_1 + \cdots + b_h x_h = n\}. \]
 We will always assume that $\ul{b}$ is \emph{primitive}, meaning that $\gcd(b_1,\ldots,b_h)=1$. This condition is necessary if every sufficiently large integer is to be represented. We say that $A$ is an \emph{asymptotic basis} for the linear form associated with $\ul{b}$ if $r_{A,\ul{b}}(n)>0$ for every sufficiently large $n$, and that it is a \emph{bounded asymptotic basis} if, in addition, $r_{A,\ul{b}}(n) = O_{\ul{b}}(1)$.

 For the unweighted form $\ul{b} = (1,\ldots,1)$, bounded asymptotic bases are the subject of the classical Erd\H{o}s--Tur\'an conjecture. Originally, this conjecture asserts that if $A\subseteq \N$ is an asymptotic basis for $x_1 + x_2$, then its representation function must be unbounded (see \cite{erdtur41}). More generally, one expects that no set $A\subseteq \N$ can be a bounded asymptotic basis for $x_1 + \cdots + x_h$. Despite considerable work, even the binary case remains open.

 The situation changes when unequal coefficients are allowed. For each integer $m\geq 2$, Moser \cite{mos62} constructed a set $A = A_m\subseteq\N$ such that $r_{A,(1,m)}(n)=1$ for every $n\in\N$. Nathanson \cite[Theorem 3]{nat07} extended this to the forms $x_1 + mx_2 + \cdots + m^{h-1}x_h$, showing that, for every $m,h\geq 2$, there exists $A = A_{m,h}\subseteq\N$ satisfying $r_{A,(1,m,\ldots,m^{h-1})}(n) = 1$ for every $n\in\N$.
 
 Thus the direct analogue of the Erd\H{o}s--Tur\'an conjecture is false for certain linear forms. However, the extent of this failure does not seem to have been systematically studied. In our previous work \cite[Conjecture 1.6]{taf25}, motivated by the threshold arising from a probabilistic model for prescribed representation functions, we conjectured that if $A\subseteq\N$ satisfies $r_{A,\ul{b}}(n)>0$ for every sufficiently large $n$, then
 \[ \limsup_{n\to\infty} \frac{r_{A,\ul{b}}(n)}{\log n} \geq 1. \]
 For $\ul{b} = (1,\ldots,1)$, this is a strong quantitative version of the Erd\H{o}s--Tur\'an conjecture. In the general weighted setting, however, Moser's and Nathanson's constructions already provide counterexamples with representation function identically equal to $1$. The present paper arose from examining how broadly this phenomenon persists.

\subsection{Bounded representation functions}
 Exact unique representation is a very rigid phenomenon. Kiss and S\'andor \cite{kissan25} recently characterized all strictly increasing coefficient vectors $\ul{b}$ and all proper subsets $A\subseteq\N$ for which $r_{A,\ul{b}}(n) = 1$ for every $n\in\N$. Their classification contains Moser's and Nathanson's constructions as basic examples. A related notion is that of a Sidon set for a linear form. Nathanson \cite{nat222} proved that an infinite $\ul{b}$-Sidon set exists precisely when the coefficients $b_1,\ldots,b_h$ have distinct subset sums. This concerns the injectivity of the linear form on $A^h$, but does not generally impose the basis property.

 A different line of research, originating in a question of S\'ark\"ozy and S\'os \cite{sarsoz97}, asks when the function $r_{A,\ul{b}}(n)$ can become constant for all sufficiently large $n$. Cilleruelo and Ru\'e \cite{cilrue09} settled the binary case, showing that apart from the Moser forms $(1,m)$, eventual constancy is impossible. Ru\'e and Spiegel \cite{ruespi20} proved that eventual constancy is impossible for broad families of coefficient vectors, including pairwise coprime coefficients $b_1,\ldots,b_h\geq 2$.

 In this paper, we study the intermediate problem of \emph{bounded}, rather than constant, representation functions. In the binary case, we show that every primitive $\ul{b} = (b_1,b_2) \neq (1,1)$ admits a bounded asymptotic basis. 

 \begin{thm}\label{MT1}
  Let $\ul{b} = (b_1,b_2) \neq (1,1)$ be a primitive vector of positive integers. Then there exists a set $A \subseteq\N$ such that
  \[ 1\leq r_{A,\ul{b}}(n)\leq (b_1 + 1)(b_2 + 1) \qquad (n\geq b_1b_2). \]
 \end{thm}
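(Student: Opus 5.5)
The plan is to build $A$ from base-$m$ digit expansions and reduce counting representations of $n$ to counting runs of a finite carry automaton. We may assume $b_1\le b_2$, so $b_2\ge 2$ and $\gcd(b_1,b_2)=1$. If $b_1=1$, Moser's set \cite{mos62} already has $r_{A,(1,b_2)}\equiv 1$. So the real content is $2\le b_1<b_2$, although the construction below should also cover $b_1=1$. Set $m:=b_1b_2$. By the Chinese remainder theorem, $(i,j)\mapsto b_1 i+b_2 j \pmod m$ is a bijection from $[0,b_2)\times[0,b_1)$ onto $\Z/m\Z$. This CRT decomposition is the engine of the construction.

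\textbf{Construction.} Let $A$ be the set of integers whose base-$m$ digits all lie in a fixed digit set $E\subseteq[0,m)$. Then for $x=\sum_k e_k m^k$ and $y=\sum_k e'_k m^k$,
\[ b_1x+b_2y=\sum_k (b_1e_k+b_2e'_k)\,m^k . \]
Each "digit" $c_k=b_1e_k+b_2e'_k$ lies in $D:=b_1E+b_2E$, and $D$ must contain a complete residue system mod $m$. The natural first choice is $E=[0,\max(b_1,b_2)]=[0,b_2]$, so that $D\supseteq b_1[0,b_2)+b_2[0,b_1)$. Since $\max D<(b_1+b_2)m/b_1$, the carries $\lfloor (c_k+\text{carry})/m\rfloor$ stay in a bounded set $\{0,\dots,K\}$.

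\textbf{Lower bound.} Reading $n$ from the least significant digit, a representation is a sequence $(c_k)$ in $D$ with carries $t_{k+1}=(c_k+t_k-n_k)/m$, where $t_0=0$ and $t_k=0$ eventually. Completeness of $D$ mod $m$ always lets one choose $c_k\equiv n_k-t_k\pmod m$. I must check that the carry can be driven to $0$ at the top: once the remaining digits of $n$ are exhausted, choose $c_k$ so the carry decreases. This is exactly where $n\ge b_1b_2$ enters, through the Frobenius-type condition $n\ge b_1b_2-b_1-b_2+1$ for the final few digits.

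\textbf{Upper bound, the main obstacle.} In each residue class $D$ may contain up to $b_1+b_2$-ish elements. Different choices of $c_k$ create different carries, so a naive count multiplies at every digit and gives no bound. I will handle this by showing that all runs of the carry automaton for $n$ must merge. Concretely, if two digit sequences with different carries at position $k$ both reach carry $0$ at the end, their carries differ by a bounded amount that must be absorbed. I will try to choose $E$ so that for each residue $r$ and each carry $t$ there is essentially one admissible $c\in D$ that does not force an eventually nonzero carry. That would make every representation determined by its first $O(1)$ digits and the final top digits, up to the multiplicity of $(e,e')\mapsto b_1e+b_2e'$ on $E\times E$. That multiplicity is at most $(b_1+1)(b_2+1)$, which is presumably where the stated constant comes from.

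If a uniform digit set $E$ makes this merging fail, my fallback is a Moser-type alternating construction. Positions of one parity would use digits in $[0,b_2]$ and the other parity digits in $[0,b_1]$, paired in base $m^2$. This makes the CRT bijection exact digitwise and leaves only a bounded ambiguity from the $b_1+1$ and $b_2+1$ endpoint digits.
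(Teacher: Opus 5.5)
There is a genuine gap: the set you construct can never have a bounded representation function. No analysis of the carry automaton can repair this, because the obstruction is a counting one.

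Since $b_1e+b_2e' \bmod b_1b_2$ depends only on $(e\bmod b_2,\ e'\bmod b_1)$, requiring $b_1E+b_2E$ to meet every class modulo $m=b_1b_2$ forces $E$ to meet every class modulo $b_2$. Hence $|E|\ge b_2$, and $|E|^2\ge b_2^2>m$ when $b_1<b_2$. But $A$ has at least $|E|^N$ elements below $m^N$, so
\[ \sum_{n<(b_1+b_2)m^N} r_{A,\mathbf{b}}(n)\ \ge\ |E|^{2N}, \]
whereas boundedness of $r_{A,\mathbf{b}}$ would make the left side $O(m^N)$. In particular, the step ``choose $E$ so that for each residue and carry there is essentially one admissible $c$'' is impossible for every $E$.

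The fallback has the same defect. With digits in $[0,b_2]$ and $[0,b_1]$ on alternating positions, $A$ has $((b_1+1)(b_2+1))^N$ elements below $m^{2N}$. Since $(b_1+1)^2(b_2+1)^2>m^2$, the ``endpoint ambiguity'' compounds at every block instead of staying bounded. With half-open ranges $[0,b_2)$, $[0,b_1)$ the density becomes exactly critical, but then entire residue classes modulo $m^2$ are missed; for $\mathbf{b}=(2,3)$, no $n\equiv 9\pmod{36}$ is represented.

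The root cause is that the CRT bijection $[0,b_2)\times[0,b_1)\to\Z/m\Z$ needs $x$ and $y$ to draw from \emph{different} digit sets at the same position. A single digit-restricted set forces them to share one, so the price of covering all residues is paid at every scale.

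The missing idea, which is how the paper proceeds, is to let the two variables play different roles and to pay for residues only once. Take $p\mid b_1$, $e=v_p(b_1)$, $M=p^e$, and let $\mathcal{D}$ be the integers whose base-$p$ digits lie in the first $e$ positions of each block of length $2e$. Then every $k\in\N$ is uniquely $z+Mw$ with $z,w\in\mathcal{D}$, and $b_1(b_2z)+b_2(Mb_1w)=b_1b_2(z+Mw)$.

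The set $A$ is the union of the finitely many translates $b_2\mathcal{D}+r$ ($0\le r<b_2$) and $Mb_1\mathcal{D}+s$ ($0\le s<b_1$). These translates handle $n\bmod b_1b_2$ via CRT while keeping $\#(A\cap[0,X])=O(\sqrt{X})$.

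The real work in the upper bound is controlling the unintended combinations: both variables from the same family, or roles swapped. This is done by a $p$-adic lemma: if $v_p(\alpha)-v_p(\beta)\equiv e\pmod{2e}$, then $(z,w)\mapsto\alpha z+\beta w$ is injective on $\mathcal{D}^2$. The reason is that $v_p(z-z')\bmod 2e\in\{0,\dots,e-1\}$ for distinct $z,z'\in\mathcal{D}$. The constant then arises as $b_2+1+b_1b_2+b_1=(b_1+1)(b_2+1)$, from counting admissible translate indices in the four cases, not from a digit multiplicity.
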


 Theorem \ref{MT1} shows that, among primitive $\ul{b} = (b_1,b_2) \in \Z_{\geq 1}$, the lower bound proposed in \cite[Conjecture 1.6]{taf25} can only possibly hold for the unweighted form.

\subsection{A general construction}
 The binary construction is a special case of a more general principle. Given finite subsets $S,T \subseteq \mathbb{Z}/H\mathbb{Z}$, we write $\mathbb{Z}/H\mathbb{Z} = S\oplus T$ if every residue class modulo $H$ can be written uniquely in the form $s+t$, with $s\in S$ and $t\in T$.

 \begin{thm}\label{MT2}
  Let $h\geq 2$, and let $\ul{b} = (b_1,\ldots,b_h)$ be a primitive vector of positive integers. Suppose that there exist a prime $p$, an integer $H\geq 1$, and a set $T \subseteq \mathbb{Z}/H\mathbb{Z}$ such that the residues $v_p(b_1),\ldots,v_p(b_h)\pmod H$ are pairwise distinct and
  \[ \mathbb{Z}/H\mathbb{Z} = \{v_p(b_1),\ldots,v_p(b_h)\} \oplus T. \]
  Then there exist a set $A\subseteq\N$, a constant $C_{\ul{b}}>0$, and an integer $n_0=n_0(\ul{b})$ such that
  \[ 1\leq r_{A,\ul{b}}(n)\leq C_{\ul{b}} \qquad (n\geq n_0). \]
 \end{thm}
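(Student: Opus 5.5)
We write $b_i = p^{e_i}u_i$ with $e_i = v_p(b_i)$ and $p\nmid u_i$. The model case is Nathanson's construction for $(1,m,\ldots,m^{h-1})$: take $A$ to be the set of integers whose base-$p$ digits vanish outside the positions $j$ with $j \bmod H\in T$. If all $u_i=1$, then $p^{e_i}A$ is supported on positions $\equiv e_i+T \pmod H$. The tiling $\mathbb{Z}/H\mathbb{Z}=\{e_1,\ldots,e_h\}\oplus T$ says these supports are disjoint and cover every position, so $\sum_i p^{e_i}x_i$ represents every $n$ exactly once.

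The plan is to keep this digit skeleton, control the units $u_i$ by a carry argument, and repair the top end with a bounded amount of freedom.

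\emph{Step 1: uniqueness modulo $p^N$.} We first prove that for every $N$ and every residue $n \bmod p^N$ there is exactly one choice of truncations $x_i \bmod p^{N-e_i}$, with $x_i$ having the $T$-digit structure, such that $\sum_i b_ix_i\equiv n \pmod{p^N}$. We determine the digits from the bottom up. The digit of $n$ at position $j$ receives a contribution from exactly one new free digit, namely the digit of $x_i$ at position $j-e_i$, where $i$ is the unique index with $j-e_i \bmod H\in T$. This contribution is multiplied by $u_i$, which is invertible mod $p$, and everything else entering position $j$ is a carry coming from lower positions, which are already fixed. Hence that digit is forced, and the forced choice always exists. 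The carries are nonnegative and bounded by $\sum_i u_i$.

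\emph{Step 2: archimedean control.} For large $n$, choose $N$ with $p^N\asymp n/K$, where $K$ is a large constant depending only on $\mathbf{b}$, and split each $x_i=y_i+p^{N-e_i}z_i$ into a low part $y_i<p^{N-e_i}$ and a high part $z_i$. By Step 1 the low parts $y_i$ are uniquely determined by $n$. The remaining equation is
\[ \sum_i u_i z_i = m:=\frac{n-\sum_i b_iy_i}{p^N}. \]
Here $m$ is an integer with $0\le m\ll K$, because $\sum_i b_iy_i < (\sum_i u_i)p^N$. So everything reduces to representing a bounded integer $m$ by the high parts. Only finitely many choices of $(z_1,\ldots,z_h)$ can occur, which gives the upper bound $r_{A,\mathbf{b}}(n)\le C_{\mathbf{b}}$, provided $A$ is chosen so that the high parts range over a fixed finite set at each scale.

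\emph{Step 3: the main obstacle, making the top part always solvable.} With the pure $T$-digit set, the leftover $m$ may not be representable, since the last carry cannot be cancelled by nonnegative digits. We would therefore modify $A$ scale by scale. Fix a sparse increasing sequence of levels $N_k$, say $N_{k+1}-N_k\to\infty$ and $N_k\equiv 0 \pmod H$. Let the elements of $A$ in $[p^{N_k},p^{N_{k+1}})$ have the $T$-digit structure below level $N_{k+1}-L$, and allow all digits in a top window of fixed length $L$. The window is chosen so that $\{\sum_i u_iz_i\}$, with each $z_i$ ranging over the window values, covers every integer in $[0,K']$; this is possible because $\gcd(u_1,\ldots,u_h)=1$ after the $p$-parts are removed, by the primitivity of $\mathbf{b}$ together with the fact that the $e_i$ are distinct. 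This guarantees $r_{A,\mathbf{b}}(n)\ge1$.

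It then remains to check two things. First, the free window digits of one variable do not collide with the structured digits of another, which we would arrange by making $L$ a multiple of $H$ and placing the windows just below $N_{k+1}$. Second, $n$ near a level boundary is handled by one of boundedly many scales. The number of representations is then at most $p^{hL}$ times the number of admissible scales, which is $O_{\mathbf{b}}(1)$. The delicate points are the carry bookkeeping across the window and the choice of $L$ and $K$; this is where I expect most of the work.
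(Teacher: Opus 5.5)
\textbf{There is a genuine gap in Step 3.} Step 3 is where the entire lower bound lives, and the construction you sketch there fails. (Step 1 is correct: it shows that the pure $T$-digit set gives $r_{A,\mathbf{b}}(n)\le 1$.)

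Your windows sit at fixed heights $N_{k+1}-L$, and $N_{k+1}-N_k\to\infty$. So a typical large $n$ has its leading digits far below the next window. Take $p^{N_k}\le n<p^{N_{k+1}-L}$, and write each $x_i=x_{i,0}+p^{N_k}x_{i,1}$ with $x_{i,0}<p^{N_k}$.
\begin{itemize}
\item No $x_i\le n$ can use the window of block $k$. Since $N_k\equiv 0\pmod H$, every $x_{i,1}$ lies in $\mathcal{D}_T$.
\item The low parts, windows included, contribute less than $(\sum_i b_i)p^{N_k}$.
\item Hence $\sum_i b_ix_{i,1}=\lfloor n/p^{N_k}\rfloor-c$ with $0\le c<\sum_i b_i$.
\end{itemize}
So if $b_1\mathcal{D}_T+\cdots+b_h\mathcal{D}_T$ has a gap of length at least $\sum_i b_i$, such $n$ are not represented at all.

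This already happens for $\mathbf{b}=(3,2)$, $p=2$, $H=2$, $T=\{0\}$. Since $\mathcal{D}_T\cap[0,15]=\{0,1,4,5\}$, every element of $3\mathcal{D}_T+2\mathcal{D}_T$ is either at most $25$ or at least $32$. Now take $n=31\cdot 2^{N_k}$ with $N_k+5\le N_{k+1}-L$. The argument above forces $3x_{1,1}+2x_{2,1}\in[27,31]$, which is impossible. This occurs for infinitely many $k$.

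Your remark that ``$n$ near a level boundary is handled by one of boundedly many scales'' tacitly assumes every $n$ is near a boundary, and sparse levels exclude that. Making the levels dense does not rescue the argument either. Then each $x_i$ may lie in any of unboundedly many lower blocks, each with its own free window below your split level. The uniqueness of the low parts from Step 1 is lost, and the count ``$p^{hL}$ times the number of scales'' is no longer justified.

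The missing idea is to remove the carries created by the units $u_i$ rather than to manage them.
\begin{itemize}
\item Set $\lambda:=\operatorname{lcm}(u_1,\ldots,u_h)$ and $d_i:=\lambda/u_i$. Then $b_id_i=\lambda p^{e_i}$.
\item If the $i$-th variable ranges over $d_i\mathcal{D}_T$, then $\sum_i b_ix_i=\lambda\sum_i p^{e_i}z_i$. The tiling alone then represents every multiple of $\lambda p^K$, with $K=\max_i e_i$, digit by digit and with no carries.
\item The remaining residues modulo $Q=\lambda p^K$ are covered by finitely many translates $c_i(r)+d_i\mathcal{D}_T$, and $A$ is the union of these $hQ$ sets.
\end{itemize}
The upper bound then needs no archimedean analysis. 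Whichever pieces the $x_k$ come from, the coefficient of $z_k$ is some $b_kd_j$ with $v_p(b_kd_j)=e_k$. So in $\sum_k b_kd_{j_k}(z_k-z_k')=0$, the nonzero terms would have valuations in the pairwise disjoint classes $e_k+T$ modulo $H$. That is impossible, because the least valuation in a vanishing sum occurs at least twice. This gives $r_{A,\mathbf{b}}(n)\le(hQ)^h$.
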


 The direct sum condition in Theorem \ref{MT2} says that the valuations of the coefficients tile a finite cyclic group. A clean particular case is obtained when the valuations form an arithmetic progression.

 \begin{cor}\label{cor-val-ap}
  Let $h\geq 2$, and let $\ul{b} = (b_1,\ldots,b_h)$ be a primitive vector of positive integers. Suppose that, after permuting the coefficients, there exist a prime $p$ and $d\geq 1$ such that
  \[ v_p(b_i)\equiv(i-1)d\pmod{hd} \qquad (1\leq i\leq h). \]
  Then there exists $A\subseteq\N$ such that, for every sufficiently large $n$,
  \[ 1\leq r_{A,\ul{b}}(n) \ll_{\ul{b}} 1. \]
 \end{cor}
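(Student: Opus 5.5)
The plan is to deduce Corollary \ref{cor-val-ap} from Theorem \ref{MT2} with $H = hd$. Both the hypothesis and the conclusion of Theorem \ref{MT2} are invariant under permuting the coefficients, since $r_{A,\ul{b}}$ does not change when the coordinates of $\ul{b}$ are permuted. So we may assume the ordering of the statement, in which $v_p(b_i)\equiv (i-1)d \pmod{hd}$ for $1\le i\le h$. We take this $p$ and $H := hd$. It then remains to check the two hypotheses of Theorem \ref{MT2}: the residues $v_p(b_i)\pmod H$ are pairwise distinct, and they admit a complement $T$ with $\Z/H\Z = S\oplus T$.

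Put $S := \{v_p(b_1),\ldots,v_p(b_h)\} = \{0, d, 2d, \ldots, (h-1)d\} \subseteq \Z/hd\Z$. These $h$ residues are pairwise distinct: if $(i-1)d \equiv (j-1)d \pmod{hd}$, then $h \mid i-j$, and since $|i-j|<h$ this forces $i=j$. For the complement take $T := \{0,1,\ldots,d-1\}\subseteq \Z/hd\Z$. Every integer $m$ with $0\le m<hd$ can be written uniquely as $m = qd + r$ with $0\le q<h$ and $0\le r<d$, by Euclidean division by $d$. Hence every residue class modulo $hd$ is uniquely of the form $s+t$ with $s\in S$ and $t\in T$, that is, $\Z/hd\Z = S\oplus T$. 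A cardinality check agrees: $|S|\cdot|T| = hd = H$.

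Theorem \ref{MT2} then gives a set $A\subseteq\N$, a constant $C_{\ul{b}}$ and an integer $n_0$ such that $1\le r_{A,\ul{b}}(n)\le C_{\ul{b}}$ for all $n\ge n_0$. This is the claimed bound $1\le r_{A,\ul{b}}(n)\ll_{\ul{b}}1$ for all sufficiently large $n$.

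The argument has no real obstacle: it is a direct verification of the hypotheses of Theorem \ref{MT2}. The only points needing care are that permuting coordinates is harmless, and that distinctness is taken modulo $hd$ rather than on the valuations themselves. The latter holds because the residues $(i-1)d$ for $1\le i\le h$ are distinct modulo $hd$.
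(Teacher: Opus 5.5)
Your proposal is correct and is the paper's argument: the paper also deduces the corollary from Theorem \ref{MT2} with $H = hd$ and $T = \{0,\ldots,d-1\}\subseteq \Z/hd\Z$. Your checks of pairwise distinctness modulo $hd$ and of the direct sum via Euclidean division by $d$ spell out what the paper leaves to the reader.
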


 In particular, Corollary \ref{cor-val-ap} applies to every primitive vector of the form
 \begin{equation}
  \ul{b}=(u_1, p^d u_2, p^{2d} u_3, \ldots, p^{(h-1)d}u_h),\qquad p\nmid u_1\cdots u_h. \label{ex-fam}
 \end{equation}
 Theorem \ref{MT2} is genuinely more general than this corollary. For example, for $\ul{b} = (1,p,p^4,p^5)$ the $p$-adic valuations are $\{0,1,4,5\}$, and $\mathbb{Z}/8\mathbb{Z} = \{0,1,4,5\}\oplus\{0,2\}$, so Theorem \ref{MT2} applies, whereas Corollary \ref{cor-val-ap} does not. The family \eqref{ex-fam} does not exhaust the corollary either: for instance, $\ul{b} = (1,p,p^5)$ is covered since $(v_p(b_1),v_p(b_2),v_p(b_3)) \equiv (0,1,2)\pmod 3$.
 
 Our results suggest that bounded representation is considerably more flexible than exact or eventually constant representation. It is natural to ask whether the hypothesis of Theorem \ref{MT2} can be generalized.

 \begin{prob}\label{gen-conj}
  Let $h\geq 2$. For which primitive vectors of positive integers $\ul{b} = (b_1,\ldots,b_h) \neq (1,\ldots,1)$ does there exist a bounded asymptotic basis for the linear form associated with $\ul{b}$?
 \end{prob}

 Simple cases not covered by our method include the forms associated with
 \[ \ul{b} = (1,1,2) \qquad\text{and}\qquad \ul{b} = (1,2,5). \]
 The first contains repeated coefficients, whereas the second has distinct subset sums.

 \begin{ntt}
  For $m\in\N$ and $A\subseteq\N$, we write $mA:=\{ma ~|~ a\in A\}$ for dilation. For a prime $p$ and $n\neq0$, we denote by $v_p(n)$ the largest $k\geq 0$ such that $p^k\mid n$.
 \end{ntt}

\section{Binary case}
 Let $p$ be a prime, let $e\geq 1$, and put $M:=p^e$. The basic set in our construction is
 \begin{equation}
  \mathcal{D} := \bigg\{\sum_{j\geq 0} \eps_j M^{2j} ~\bigg|~ \eps_j\in\{0,\ldots,M-1\},\ \eps_j = 0\text{ for all but finitely many } j\bigg\}. \label{setD}
 \end{equation}
 In base $p$, the digits of an element of $\mathcal{D}$ occupy the first $e$ positions in each block of length $2e$, while those of $M\mathcal{D}$ occupy the remaining $e$ positions. Thus every $k\in\N$ has a unique decomposition
 \begin{equation}
  k = z+Mw,\qquad z,w\in\mathcal{D}. \label{zMw}
 \end{equation}
 The same separation of digit positions gives the following injectivity property.

 \begin{lem}\label{digit-inj}
  If $\alpha,\beta\geq 1$ satisfy $v_p(\alpha)-v_p(\beta)\equiv e\pmod{2e}$, then the map
  \[ \mathcal{D}^2 \ni (z,w) \longmapsto \alpha z + \beta w\in\N \]
  is injective.
 \end{lem}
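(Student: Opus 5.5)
The plan is to argue directly by $p$-adic valuations, comparing the valuations of the two sides of the identity that a collision would produce. Suppose $\alpha z+\beta w=\alpha z'+\beta w'$ with $z,z',w,w'\in\mathcal{D}$. Then
\[ \alpha(z-z')=\beta(w'-w). \]
Since $\alpha,\beta\geq 1$, we have $z=z'$ if and only if $w=w'$. So it suffices to rule out the case where both differences are nonzero.

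The key step is to determine which valuations a nonzero difference of two elements of $\mathcal{D}$ can have. Write $z=\sum_j\eps_jM^{2j}$ and $z'=\sum_j\eps'_jM^{2j}$. Then
\[ z-z'=\sum_j\delta_jM^{2j}, \qquad \delta_j:=\eps_j-\eps'_j\in\{-(M-1),\ldots,M-1\}. \]
Let $j_0$ be the least index with $\delta_{j_0}\neq 0$. Since $0<|\delta_{j_0}|<M=p^e$, we get $v_p(\delta_{j_0})<e$. Every later term $\delta_jM^{2j}$ with $j>j_0$ is divisible by $p^{2e(j_0+1)}$, and $2e(j_0+1)$ exceeds $2ej_0+v_p(\delta_{j_0})$. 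Hence
\[ v_p(z-z')=2ej_0+v_p(\delta_{j_0}), \]
so $v_p(z-z') \bmod 2e$ lies in $\{0,\ldots,e-1\}$. The same holds for $v_p(w-w')$.

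Now take valuations on both sides of the displayed identity:
\[ v_p(\alpha)+v_p(z-z')=v_p(\beta)+v_p(w-w'). \]
This gives
\[ v_p(z-z')-v_p(w-w')\equiv v_p(\beta)-v_p(\alpha)\equiv -e\equiv e\pmod{2e}. \]
However, both valuations reduce modulo $2e$ to residues in $\{0,\ldots,e-1\}$, so their difference is congruent to an integer strictly between $-e$ and $e$. No such integer is congruent to $e$ modulo $2e$, which is a contradiction. Hence $z=z'$ and $w=w'$, proving injectivity.

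The only delicate point is the valuation computation for differences. It must use the strict bound $|\delta_j|\leq M-1$, so that no carry reaches the next block of $2e$ digits. Once that is in place, the rest is immediate.
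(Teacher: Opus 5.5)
Your proof is correct and takes essentially the same approach as the paper. Like the paper, you show that a nonzero difference of elements of $\mathcal{D}$ has $p$-adic valuation $2ej_0+v_p(\delta_{j_0})$, whose residue modulo $2e$ lies in $\{0,\ldots,e-1\}$, and then compare valuations in $\alpha(z-z')=\beta(w'-w)$ against $v_p(\alpha)-v_p(\beta)\equiv e\pmod{2e}$.
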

 \begin{proof}
  If $z,z'\in\mathcal{D}$ are distinct, then
  \[ v_p(z-z') \bmod{2e} \in \{0,\ldots,e-1\}. \]
  Indeed, if $\ell$ is the first position at which their base-$M^2$ digits differ, then $z-z' = M^{2\ell}(d+M^2q)$, where $q\in\mathbb{Z}$ and $0<|d|<M$. Hence $v_p(z-z') = 2e\ell + v_p(d)$, with $0\leq v_p(d)<e$.
  
  Now suppose that $\alpha z + \beta w = \alpha z'+\beta w'$. If $z\neq z'$, then also $w\neq w'$, and for some $0\leq i,j < e$,
  \begin{align*}
   v_p(\alpha(z-z'))-v_p(\beta(w'-w)) &= (v_p(\alpha)- v_p(\beta)) + v_p(z-z')-v_p(w'-w) \\
   &\equiv e+i-j \not\equiv 0 \pmod{2e}.
  \end{align*}
  But this contradicts $\alpha(z-z') = \beta(w'-w)$, so we must have $z=z'$ and $w=w'$.
 \end{proof}

 We now turn to the form $b_1x+b_2y$. Suppose first that the two variables were allowed to range over different sets. Together with \eqref{zMw}, the identity
 \begin{equation}
  b_1(b_2 z) + b_2(Mb_1 w) = b_1 b_2(z + Mw) \label{magic-id}
 \end{equation}
 shows that $b_2\mathcal{D}$ and $Mb_1\mathcal{D}$ naturally represent all multiples of $b_1b_2$. To cover the remaining residue classes, we add finitely many translates of these two sets; taking their union then gives a single set from which both variables may be chosen.

 \begin{proof}[Proof of Theorem \ref{MT1}]
  By symmetry, we may assume that $b_1>1$. Choose a prime $p\mid b_1$, and put $e := v_p(b_1)$ and $M := p^e$. Since $\gcd(b_1,b_2) = 1$, one has $p\nmid b_2$. Let $\mathcal{D}$ be the set from \eqref{setD}, and define
  \[ P_r := b_2\mathcal{D} + r\quad (0\leq r<b_2),\qquad Q_s := Mb_1\mathcal{D} + s\quad (0\leq s<b_1), \]
  and
  \begin{equation}
   A := \bigcup_{r=0}^{b_2-1} P_r\ \cup\ \bigcup_{s=0}^{b_1-1} Q_s. \label{setA}
  \end{equation}

  Put $B := b_1b_2$, and write $n = Bk+t$, where $k\geq 1$ and $0\leq t<B$. By the Chinese remainder theorem, there are unique $0\leq r< b_2$ and $0\leq s<b_1$ such that $b_1r + b_2s\equiv t\pmod B$, hence $b_1r + b_2s = t+qB$ for some $q\in\{0,1\}$. Writing $k-q = z+Mw$ with $z,w\in\mathcal{D}$, for
  \[ x := b_2z + r\in P_r,\qquad y := Mb_1 w + s\in Q_s, \]
  we have $b_1x + b_2y = B(z+Mw) + t+qB = n$. Thus $r_{A,(b_1,b_2)}(n)\geq 1$ for every $n\geq B$.

  For the upper bound, we distinguish the four possibilities for the sets containing $x, y \in A$. Once these sets and their indices are fixed, write $x,y$ in terms of elements $z,w\in\mathcal{D}$. The equation $b_1x+b_2y=n$ then becomes
  \[ n - b_1 r - b_2 s =
     \begin{cases}
      b_1 b_2 z + b_2^2 w,  &\text{if } x\in P_r,\ y\in P_s, \\
      b_1 b_2 z + Mb_1 b_2 w, &\text{if } x\in P_r,\ y\in Q_s, \\
      Mb_1^2 z + b_2^2 w, &\text{if } x\in Q_r,\ y\in P_s, \\
      Mb_1^2 z + Mb_1 b_2 w, &\text{if } x\in Q_r,\ y\in Q_s.                   
     \end{cases} \]
  In each ``$\alpha z + \beta w$'' on the right-hand side above, we have $v_p(\alpha) - v_p(\beta) \equiv e \pmod{2e}$, hence Lemma \ref{digit-inj} shows that, for fixed indices $r,s$, there is at most one choice of $(z,w)$. It remains to count the possible indices:
  \begin{itemize}
   \item If $x\in P_r$ and $y\in P_s$, reducing modulo $b_2$ gives $b_1r\equiv n\pmod{b_2}$, so $r$ is uniquely determined and $s$ has at most $b_2$ choices.\smallskip
   
   \item If $x\in P_r$ and $y\in Q_s$, reducing modulo $b_2$ determines $r$, while reducing modulo $b_1$ determines $s$, giving at most one choice.\smallskip
   
   \item If $x\in Q_r$ and $y\in P_s$, there are at most $b_1b_2$ choices altogether.\smallskip
   
   \item If $x\in Q_r$ and $y\in Q_s$, reducing modulo $b_1$ determines $s$, while $r$ has at most $b_1$ choices.
  \end{itemize}
  
  Therefore
  \[ r_{A,(b_1,b_2)}(n)\leq b_2+1+b_1b_2+b_1=(b_1+1)(b_2+1). \qedhere \]
 \end{proof}

\section{General construction}
 The construction below generalizes the decomposition used in the binary case. There, the two sets $\mathcal{D}$ and $M\mathcal{D}$ partition the base-$p$ positions into two classes, giving \eqref{zMw}. We now use a tiling of $\Z/H\Z$ to divide the digit positions among $h$ variables.

 Let $p$ be a prime, let $H\geq1$, and let $T \subseteq \mathbb{Z}/H\mathbb{Z}$. Define
 \[ E_T := \{j\in\N ~|~ j\bmod H\in T\} \]
 and
 \begin{equation}
  \mathcal{D}_T := \bigg\{\sum_{j\in E_T} \eps_j p^j ~\bigg|~ \eps_j \in \{0,\ldots,p-1\},\ \eps_j = 0\text{ for all but finitely many } j\bigg\}. \label{setDT}
 \end{equation}
 The digits of $\mathcal{D}_T$ occupy precisely the positions whose residues modulo $H$ belong to $T$. If the translates $a_i+T$ tile $\mathbb{Z}/H\mathbb{Z}$, the sets $p^{a_i}\mathcal{D}_T$ partition the base-$p$ positions in the same way that $\mathcal{D}$ and $M\mathcal{D}$ do in \eqref{zMw}.

 \begin{lem}\label{gen-digit-dec}
  Let $a_1,\ldots,a_h$ be pairwise distinct modulo $H$, and suppose that
  \[ \mathbb{Z}/H\mathbb{Z} = \{a_1,\ldots,a_h\}\oplus T. \]
  Put $K:=\max_{1\leq i\leq h} a_i$. Then every nonnegative multiple $m$ of $p^K$ can be written uniquely as
  \begin{equation}
   m = \sum_{i=1}^h p^{a_i} z_i,\qquad z_1,\ldots,z_h\in\mathcal{D}_T. \label{gen-z}
  \end{equation}
 \end{lem}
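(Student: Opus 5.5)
The plan is to prove the lemma as a purely combinatorial statement about base-$p$ digit positions, in the same spirit as \eqref{zMw}. Here the $a_i$ are nonnegative integers; in the intended application they are the valuations $v_p(b_i)$. For each $i$, the number $p^{a_i}z_i$ with $z_i\in\mathcal{D}_T$ has base-$p$ digits supported on the set of positions
\[ F_i := a_i + E_T = \{\, j\in\N ~|~ j\geq a_i,\ (j-a_i)\bmod H\in T\,\}, \]
and every digit in $\{0,\ldots,p-1\}$ may be placed independently at each position of $F_i$. So the whole statement reduces to understanding how the sets $F_1,\ldots,F_h$ sit inside $\N$.

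\textbf{Step 1: disjointness.} First I will show that the sets $F_i$ are pairwise disjoint. Suppose $j\in F_i\cap F_{i'}$. Then $j\equiv a_i+t\equiv a_{i'}+t'\pmod H$ with $t,t'\in T$. Since the sum $\{a_1,\ldots,a_h\}\oplus T$ is direct and the $a_i$ are distinct modulo $H$, this forces $i=i'$ (and $t=t'$).

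\textbf{Step 2: covering above $K$.} Next I will show that $F_1\cup\cdots\cup F_h\supseteq\{j\in\N ~|~ j\geq K\}$. Given $j\geq K$, the tiling gives some $i$ and $t\in T$ with $j\equiv a_i+t\pmod H$. Since $j\geq K\geq a_i$, the integer $j-a_i$ is nonnegative and its residue lies in $T$, so $j-a_i\in E_T$ and hence $j\in F_i$.

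\textbf{Step 3: no carries, hence existence and uniqueness.} Because the supports $F_i$ are disjoint, the sum $\sum_i p^{a_i}z_i$ involves no carries. Its base-$p$ expansion is therefore exactly the superposition of the digit strings of the summands: the digit at position $j\in F_i$ is the digit of $z_i$ at position $j-a_i$, and the digit at every position outside $\bigcup_i F_i$ is $0$.

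For existence, let $m$ be a multiple of $p^K$. All of its digits at positions $<K$ vanish, and every position $\geq K$ lies in exactly one $F_i$. I therefore define $z_i$ by transporting the digits of $m$ at the positions $j\in F_i$ with $j\geq K$ to the positions $j-a_i$, and setting all its other digits to zero. Then $z_i\in\mathcal{D}_T$ and \eqref{gen-z} holds.

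For uniqueness, the no-carry observation shows that any representation \eqref{gen-z} determines every digit of every $z_i$ from the base-$p$ expansion of $m$. This uses uniqueness of base-$p$ expansions, and in particular forces the digits of $z_i$ at the positions of $F_i$ below $K$ to be zero. Hence two representations coincide.

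The only delicate point is Step 2, namely the role of $K$. Positions below $K$ need not be covered by the $F_i$, so a general $m$ may fail to be representable; this is exactly why the lemma restricts to multiples of $p^K$. It is also why the representatives $a_i$ must be taken as nonnegative integers, with the congruence conditions imposed only modulo $H$.
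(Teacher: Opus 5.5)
Your proposal is correct and follows the same route as the paper. Both proofs assign each base-$p$ digit position $j\geq K$ of $m$ to the unique index $i$ with $j-a_i\in E_T$, and both derive uniqueness from the pairwise disjointness of the sets $a_i+E_T$. You additionally make explicit the no-carry argument and the role of $j\geq K\geq a_i$, which the paper leaves compressed.
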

 \begin{proof}
  Write $m = \sum_{j\geq K} \eps_j p^j$ in base $p$. For each $j\geq K$, the direct sum hypothesis determines a unique index $i$ such that $j-a_i\in E_T$. We then place the digit $\eps_j$ in position $j-a_i$ of $z_i$. Thus the term $\eps_j p^j$ appears in $p^{a_i} z_i$, and summing over all $j$ gives \eqref{gen-z}. The decomposition is unique because the sets $a_i + E_T$ are pairwise disjoint.
 \end{proof}

 The same separation of digit positions gives the $h$-variable analogue of Lemma \ref{digit-inj}.

 \begin{lem}\label{gen-digit-inj}
  Under the hypotheses of Lemma \ref{gen-digit-dec}, let $\alpha_1,\ldots,\alpha_h \geq 1$ satisfy
  \[ v_p(\alpha_i)\equiv a_i\pmod H \qquad (1\leq i\leq h). \]
  Then the map
  \[ \mathcal{D}_T^h \ni (z_1,\ldots,z_h) \longmapsto \sum_{i=1}^h \alpha_i z_i \in \N \]
  is injective.
 \end{lem}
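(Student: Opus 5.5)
The plan is to imitate the proof of Lemma \ref{digit-inj}: compare $p$-adic valuations of the differences, working modulo $H$. Suppose $\sum_i \alpha_i z_i = \sum_i \alpha_i z_i'$ with $(z_i)\neq(z_i')$. Let $I$ be the set of indices with $z_i\neq z_i'$; it is nonempty. Then
\[ \sum_{i\in I} \alpha_i (z_i - z_i') = 0, \]
and every summand is nonzero. We will derive a contradiction from the residues modulo $H$ of the valuations of these summands.

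The first step is the valuation of a single difference. If $z,z'\in\mathcal{D}_T$ are distinct, let $\ell$ be the smallest base-$p$ position where their digits differ. Then $\ell\in E_T$, since both digit expansions are supported on $E_T$. Moreover $v_p(z-z') = \ell$: below position $\ell$ the digits agree, and at position $\ell$ the difference of digits lies in $\{\pm1,\ldots,\pm(p-1)\}$, which is a $p$-adic unit. Hence $v_p(z-z')\bmod H\in T$. Consequently, for $i\in I$,
\[ v_p(\alpha_i(z_i-z_i')) \equiv a_i + t_i \pmod H \]
for some $t_i\in T$.

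The second step is the comparison. Let $\mu := \min_{i\in I} v_p(\alpha_i(z_i-z_i'))$. If this minimum were attained by exactly one index, the sum would have valuation exactly $\mu$, contradicting the fact that it equals $0$. So the minimum is attained by at least two distinct indices $i\neq i'$ in $I$. These satisfy $a_i+t_i\equiv a_{i'}+t_{i'}\pmod H$, which contradicts the uniqueness in the direct sum $\mathbb{Z}/H\mathbb{Z}=\{a_1,\ldots,a_h\}\oplus T$, because the $a_i$ are distinct modulo $H$. Hence $I=\emptyset$, and the map is injective.

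No step looks like a real obstacle. The only point needing care is to use "minimum valuation attained at least twice" rather than a pairwise comparison, because with $h>2$ terms two differences need not cancel each other directly. The ultrametric argument above handles this, and the direct sum condition only enters at the end, for the two indices achieving the minimum.
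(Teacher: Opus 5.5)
Your proof is correct and follows essentially the same approach as the paper. The first differing digit lies in $E_T$, so $v_p(\alpha_i(z_i-z_i'))\bmod H\in a_i+T$, and the pairwise disjointness of the translates $a_i+T$ contradicts the fact that the least valuation in a vanishing sum of nonzero terms is attained at least twice.
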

 \begin{proof}
  Suppose that $\sum_i \alpha_i z_i = \sum_i \alpha_i z_i'$. Whenever $z_i\neq z_i'$, the first base-$p$ position at which they differ belongs to $E_T$, and hence
  \[ v_p(\alpha_i (z_i-z_i'))\bmod H\in a_i+T. \]
  Since the sets $a_i+T$ are pairwise disjoint, the nonzero terms in $\sum_i \alpha_i (z_i-z_i') = 0$ have distinct valuations. This is impossible, since the least valuation in a vanishing sum must occur at least twice. Thus $z_i = z_i'$ for every $i$.
 \end{proof}

 We now apply this decomposition to the coefficients of the linear form.

 \begin{proof}[Proof of Theorem \ref{MT2}]
  Write
  \[ b_i = p^{a_i} u_i,\qquad a_i := v_p(b_i),\qquad p\nmid u_i \qquad (1\leq i\leq h). \]
  Let $L := \operatorname{lcm}(u_1,\ldots,u_h)$ and $d_i := L/u_i$. Since $p\nmid L$, we have $b_id_i = Lp^{a_i}$ for $1\leq i\leq h$. This gives the analogue of \eqref{magic-id}: if the variables were allowed to range over the different sets $d_i\mathcal{D}_T$, then we could write
  \begin{equation}
   \sum_{i=1}^h b_i(d_iz_i) = L\sum_{i=1}^h p^{a_i}z_i, \label{gen-magic}
  \end{equation}
  which, by Lemma \ref{gen-digit-dec}, represents all multiples of $Lp^{\max_i a_i}$. As in \eqref{setA}, we add finitely many translates and take their union.

  Put $K := \max_i a_i$ and $Q := Lp^K$. Since $\gcd(b_1,\ldots,b_h) = 1$, for each $0\leq r<Q$ choose integers $c_1(r),\ldots,c_h(r) \geq 0$ such that
  \[ \sum_{i=1}^h b_i c_i(r)\equiv r\pmod Q, \]
  and define
  \begin{equation}
   A := \bigcup_{r=0}^{Q-1} \bigcup_{i=1}^h\ (c_i(r) + d_i\mathcal{D}_T). \label{setAgen}
  \end{equation}
  Let $C(r) := \sum_i b_i c_i(r)$ and $n_0 := \max_r C(r)$. Given $n\geq n_0$, choose $r\equiv n\pmod Q$, so that $m := (n-C(r))/L$ is a nonnegative multiple of $p^K$. By Lemma \ref{gen-digit-dec}, we can write $m=\sum_i p^{a_i} z_i$ with $z_i\in\mathcal{D}_T$. Setting $x_i := c_i(r) + d_i z_i\in A$, \eqref{gen-magic} gives
  \[ \sum_{i=1}^h b_i x_i = C(r) + L\sum_{i=1}^h p^{a_i}z_i = C(r) + Lm = n, \]
  hence $r_{A,\ul{b}}(n)\geq 1$ for every $n\geq n_0$.

  For the upper bound, fix for each variable $x_k$ one of the $hQ$ sets appearing in \eqref{setAgen}, say $x_k = c_{j_k}(r_k) + d_{j_k}z_k$, where $z_k \in \mathcal{D}_T$. Then the equation $\sum_k b_k x_k = n$ becomes
  \[ \sum_{k=1}^h b_k d_{j_k} z_k = n - \sum_{k=1}^h b_k c_{j_k}(r_k). \]
  Once the sets containing the variables are fixed, the right-hand side is fixed. Moreover, since $p\nmid d_{j_k}$, we have $v_p(b_k d_{j_k}) = v_p(b_k) = a_k$. Lemma \ref{gen-digit-inj} therefore shows that there is at most one tuple $(z_1,\ldots,z_h) \in \mathcal{D}_T^h$ satisfying the equation. Since there are $hQ$ possible sets for each variable $x_k$, it follows that $r_{A,\ul{b}}(n) \leq (hQ)^h = (hLp^K)^h$.
 \end{proof}

 Corollary \ref{cor-val-ap} follows by taking $T := \{0,\ldots,d-1\} \subseteq \mathbb{Z}/hd\mathbb{Z}$.

\addtocontents{toc}{\protect\setcounter{tocdepth}{0}}
\section*{Acknowledgements}
 The author acknowledges the support of the São Paulo Research Foundation (FAPESP), Brazil, Process No.~2025/15961-3.
 
\addtocontents{toc}{\protect\setcounter{tocdepth}{1}}

\bibliographystyle{amsplain}
\bibliography{$HOME/Academie/Recherche/_latex/bibliotheca}%
\end{document}